\documentclass[12pt]{amsart}

\usepackage[utf8]{inputenc}

\usepackage{eucal}
\usepackage{upgreek}
\usepackage{pdfsync}
\usepackage[all, cmtip]{xy}
\usepackage{amsfonts}
\usepackage{amsmath}
\usepackage{amssymb}
\usepackage{amscd}
\usepackage{color,xcolor}
\usepackage{amsthm}
\usepackage{amsxtra}
\usepackage{bm}
\usepackage{bbm}
\usepackage{graphicx}
\usepackage{mathrsfs}
\usepackage[numbers,sort]{natbib}
\usepackage{isomath}
\usepackage{enumitem}
\usepackage{latexsym} 
\usepackage{dsfont}
\usepackage{url}
\usepackage{mathtools}
\usepackage{soul}

\setlength{\hoffset}{-.5 in}
\setlength{\voffset}{-.5 in}
\setlength{\textwidth}{6.0 in}
\setlength{\textheight}{8.5 in}

\setcounter{tocdepth}{2}

%



\topmargin 0pt 
\headsep 15pt 

\textheight 8.6in 
\textwidth 6.15in 
\topmargin 0pt

\headheight12pt 

\newtheorem{thm}{Theorem}

\newtheorem{prop}{Proposition}

\newtheorem{cor}[prop]{Corollary}

\theoremstyle{definition}



\theoremstyle{remark}

\newtheorem{rmk}[prop]{Remark} 

\def\mrm#1{{\mathrm{#1}}}
\def\bb#1{{\mathbb{#1}}}
\def\cl#1{{\mathcal{#1}}}

\newcommand{\R}{{\mathbb{R}}}
\newcommand{\Z}{{\mathbb{Z}}}
\newcommand{\C}{{\mathbb{C}}}
\newcommand{\Q}{{\mathbb{Q}}}

\newcommand{\bK}{{\mathbb{K}}}
\newcommand{\HH}{{\mathbb{H}}}

\newcommand{\al}{\alpha}

\newcommand{\La}{\Lambda}

\newcommand{\cA}{\mathcal{A}}

\DeclareMathOperator{\ind}{\mathrm{ind}}

\renewcommand{\null}{\mathrm{null}}


\def\H2{H^{(2)}}
\def\F{\mathbb F}


\usepackage[hyperindex]{hyperref}



\newcommand{\esemail}{egor.shelukhin@umontreal.ca}

\begin{document}
	
\title[On non-contractible closed geodesics and homotopy groups]{Remark on non-contractible closed geodesics \\ and homotopy groups}

\author{Egor Shelukhin}
\email{\esemail}
\address{Department of Mathematics and Statistics, University of Montreal, C.P. 6128 Succ.  Centre-Ville Montreal, QC H3C 3J7, Canada}

\author{Jun Zhang}
\email{jzhang4518@ustc.edu.cn}
\address{The Institute of Geometry and Physics, University of Science and Technology of China, 96 Jinzhai Road, Hefei Anhui, 230026, China}

\bibliographystyle{abbrv}

\begin{abstract}

We prove that if the $m$-th homotopy group for $m \geq 2$ of a closed manifold has non-trivial invariants or coinvariants under the action of the fundamental group, then there exist infinitely many geometrically distinct closed geodesics for a $C^4$-generic Riemannian metric. If moreover there are infinitely many conjugacy classes in the fundamental group, then the same holds for every Riemannian metric.

\end{abstract}


\maketitle

\section{Introduction and main results}

\subsection{Introduction}

The question of the existence of infinitely many geometrically distinct closed (periodic) geodesics on Riemannian or Finsler manifolds was studied extensively in the past. This question seeks to determine whether or not for every metric or for a class of metrics on a given manifold $M$ there exist infinitely many {\em prime} closed geodesics: those closed geodesics that are not obtained as iterations of others. 

The efforts were primarily focused on the simply-connected case, where by \cite{GromollMeyer, VPSullivan, JM16} a manifold with finitely many prime closed geodesics must have $\bK$-cohomology, for $\bK = \Q$ or $\bK = \F_p,$ generated as a unital $\bK$-algebra by a single element. Manifolds satisfying this condition for some $\bK$ do exist: notable examples are the compact rank-one symmetric spaces (CROSS): $S^n, \C P^n, \R P^n, \HH P^n, \bb OP^2$. For such spaces it is known \cite{Rad94} that a $C^2$-generic metric must have infinitely many prime closed geodesics. Prior work \cite{KT72,Hi84} proves the same for $C^4$-generic metrics: \cite{KT72} implies that $C^4$-generically on any closed manifold there are infinitely many prime closed geodesics, unless they are all hyperbolic, while \cite{Hi84} proves that for manifolds rationally homotopy equivalent to a CROSS other than $\R P^n,$ given that all prime closed geodesics are hyperbolic, there are infinitely many of them. The case of $\R P^n$ follows by a quick covering argument, as does the case of any closed manifold with finite fundamental group, or by the paper \cite{BTZ81} discussed below. A celebrated result \cite{Bangert93, Franks92, Hingston93} proves that every Riemannian metric on $S^2$ has infinitely many prime closed geodesics, and the same is true for $\R P^2$. The analogous statement for any other CROSS is a well-known open question.

The case of infinite fundamental groups at first appears to be simpler, as by a classical theorem often attributed to Cartan or to Hilbert (see for example \cite[Chapter 12, Theorem 2.2]{DC92-book}), in every non-trivial free homotopy class of loops on $M$ there exists a closed geodesic. However, firstly, it is an open question in group theory \cite{Makowsky, BMS01} whether an infinite finitely presented group could have finitely many conjugacy classes. We recall that the conjugacy classes in $\pi_1(M)$ correspond to free homotopy classes of loops on $M$. Secondly, even if we assume that there are infinitely many conjugacy classes, the simplification appears to be quite illusory, starting with the fact that non-homotopic closed geodesics might not be geometrically distinct (see also \cite{Taim10}). Existence of infinitely many prime closed geodesics is currently known under stronger conditions involving the fundamental group: for example \cite{BH84} if the manifold is not the circle and its fundamental group is $\Z$ (see \cite{Tai85,Tai93, RT22} for other conditions of this kind and \cite{Gro2000, Bal86} for prior work). Generic existence (in $C^4$ topology) of infinitely many prime closed geodesics was proved under special conditions on $\pi_1(M)$ in \cite{BTZ81}: for instance this covers the case where $\pi_1(M)$ has finitely many conjugacy classes, yet is not simply-connected. The recent work \cite{RT22} proves $C^4$-generic existence of infinitely many prime closed geodesics on every closed $3$-manifold, and produces further results about closed geodesics on connected sums of closed manifolds (see also \cite{PP05,La01}).

Our main result shows that $C^4$-generic existence of infinitely many prime closed geodesics holds on closed manifolds, of any dimension, under a topological condition regarding the action of $\pi_1(M)$ on $\pi_m(M)$ for $m \geq 2,$ in the spirit of  Bangert-Hingston \cite{BH84}, Albers-Frauenfelder-Oancea \cite{AFO17} and Taimanov \cite{Tai85}, which have inspired this paper. Namely, we assume that this action for some $m \geq 2$ has either non-trivial invariants or non-trivial coinvariants. We also prove that if in addition there are infinitely many free homotopy classes of loops on $M,$ then there exist infinitely many prime closed geodesics for every Riemannian metric on $M.$





\subsection{Main results}



\begin{thm}\label{thm: inv coinv}
Let $M$ be a closed manifold. Suppose that there exists $x \in \pi_m(M),$ $x \neq 0,$ for $m\geq 2$ such that $a \cdot x = x$ for all $a \in \pi_1(M)$ or a homomorphism $\xi: \pi_m(M) \to A,$ $\xi \neq 0$ to an abelian group $A$ such that $a^* \xi = \xi$ for all $a \in \pi_1(M).$ Moreover, let the set $\pi_1(M)/\mrm{conj}$ of conjugacy classes in $\pi_1(M)$ be infinite.
Then for every Riemannian metric on $M$ there exist infinitely many geometrically distinct closed geodesics.\end{thm}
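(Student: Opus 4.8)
The plan is to argue by contradiction. Fix a Riemannian metric $g$ and assume $M$ carries only finitely many geometrically distinct closed geodesics $c_1,\dots,c_r$. I work with the energy functional $E(\gamma)=\int_{S^1}|\dot\gamma|^2$ on the Hilbert manifold $\Lambda M=W^{1,2}(S^1,M)=\bigsqcup_{\beta}\Lambda_\beta M$, the disjoint union over $\beta\in\pi_1(M)/\mathrm{conj}$ of its connected components. Recall the standard package: $E$ is $O(2)$-invariant and satisfies Palais--Smale; its critical points are the constant loops (a copy of $M$ at level $0$) together with the closed geodesics and all their iterates $c_i^{(\ell)}$; by Gromoll--Meyer the local (co)homology of $c_i^{(\ell)}$ is supported in the window $[\mathrm{ind}(c_i^{(\ell)}),\,\mathrm{ind}(c_i^{(\ell)})+\mathrm{null}(c_i^{(\ell)})]$ with $\mathrm{null}(c_i^{(\ell)})\le 2\dim M-2$ and rank bounded uniformly in $\ell$; and by Bott $\mathrm{ind}(c_i^{(\ell)})=\ell\,\bar\alpha(c_i)+O(1)$ with $\bar\alpha(c_i)\ge 0$ the mean index. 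In particular, whenever $\bar\alpha(c_i)=0$ the local data of all iterates $c_i^{(\ell)}$ lies in a single finite window $[0,D_i]$, and I set $D=D(g):=\max\{D_i:\bar\alpha(c_i)=0\}<\infty$, a metric-dependent constant.

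First I use the Cartan--Hilbert theorem: every non-trivial free homotopy class contains a closed geodesic, hence one of the $c_i^{(\ell)}$, so $\pi_1(M)/\mathrm{conj}\setminus\{e\}=\bigcup_{i=1}^r\{[\gamma_i^{\ell}]\}_{\ell\ge 1}$ with $c_i$ in the class $[\gamma_i]$. Since $\pi_1(M)/\mathrm{conj}$ is infinite, some $\gamma_i$ --- relabel it $\gamma$ --- has $\{[\gamma^k]\}_{k\ge 1}$ infinite; fix an infinite $S\subseteq\mathbb N$ with the classes $\beta_k:=[\gamma^k]$, $k\in S$, pairwise distinct, so $\gamma$ has infinite order and each $\beta_k\neq e$. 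Next I extract the homotopical input. For $\beta$ represented by $\gamma$, the evaluation fibration $\Omega M\to\Lambda_\beta M\xrightarrow{\mathrm{ev}}M$ has, in degrees $\ge 2$, an exact sequence with connecting map the monodromy twist $\mathrm{id}-\gamma_*$; reading off its kernels and cokernels gives short exact sequences
\[
0\to (\pi_{m+1}M)_{\gamma}\to \pi_m(\Lambda_\beta M)\to (\pi_m M)^{\gamma}\to 0,\qquad 0\to (\pi_{m}M)_{\gamma}\to \pi_{m-1}(\Lambda_\beta M)\to (\pi_{m-1} M)^{\gamma}\to 0,
\]
where $(\cdot)^{\gamma}$ and $(\cdot)_{\gamma}$ denote invariants and coinvariants of $\gamma_*$. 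Since $x$ (resp.\ $\xi$) is fixed by \emph{all} of $\pi_1(M)$ it is fixed by every $\gamma_*$: $x\in(\pi_m M)^{\gamma}$ (equivalently, $S^m\xrightarrow{x}M$ extends to $S^m\times S^1\to M$ along a loop in $\beta$), resp.\ $\xi$ descends to $(\pi_m M)_{\gamma}$. Thus for \emph{every} $\beta$ I obtain a non-zero class $w_\beta\in\pi_j(\Lambda_\beta M)$, where $j:=m$ in the invariant case (with $\mathrm{ev}_*w_\beta=x$) and $j:=m-1$ in the coinvariant case.

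Now I run min--max. For $k\in S$ set $c(w_{\beta_k})=\inf_f\max_{S^j}E\circ f$ over $f\colon S^j\to\Lambda_{\beta_k}M$ representing $w_{\beta_k}$ (or, after a Hurewicz/universal-coefficient passage to $u_{\beta_k}\in H^{j}(\Lambda_{\beta_k}M;\bK)\setminus 0$, its cohomological analogue). By Palais--Smale this is a positive critical value realised by a closed geodesic $\bar c_{\beta_k}=c_{i(k)}^{(\ell(k))}\in\Lambda_{\beta_k}M$ whose local (co)homology is non-zero in degree $j$, so $\mathrm{ind}(c_{i(k)}^{(\ell(k))})\le j$. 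Since only finitely many components $\Lambda_\beta M$ meet any sublevel set $\{E\le R\}$ (loops of bounded length realise only finitely many free homotopy classes, $M$ being compact), $\min_{\Lambda_{\beta_k}M}E\to\infty$, hence $E(\bar c_{\beta_k})=c(w_{\beta_k})\to\infty$ along $S$. Pigeonholing $i(k)$, some $c_{i_0}$ has $i(k)=i_0$ for all $k$ in an infinite $T\subseteq S$; then $\ell(k)^2E(c_{i_0})=E(c_{i_0}^{(\ell(k))})\to\infty$ forces $\ell(k)\to\infty$ while $\mathrm{ind}(c_{i_0}^{(\ell(k))})\le j$ stays bounded, so Bott gives $\bar\alpha(c_{i_0})=0$.

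The contradiction then comes from a resonance count: as $\bar\alpha(c_{i_0})=0$, the local (co)homology of every iterate $c_{i_0}^{(\ell)}$ sits in degrees $[0,D_{i_0}]\subseteq[0,D(g)]$, so it can detect a class in degree $j$ only if $j\le D(g)$; more sharply, for large $\ell$ the degree-$j$ class of $\Lambda_{[\gamma^\ell]}M$ can, among all critical points $c_i^{(\ell')}$, be accounted for only by some $c_i$ with $\bar\alpha(c_i)>0$, and then $\mathrm{ind}(c_i^{(\ell')})\in[j-2\dim M+2,\,j]$ bounds $\ell'$, hence $\ell$, contradicting that infinitely many distinct $\beta_k$ occur. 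So the argument closes \emph{provided the detecting class can be taken in degree $j>D(g)$, uniformly over $\{\beta_k\}$}, and I expect this to be the main obstacle, since the hypothesis only supplies a class in the fixed degree $m$ or $m-1$. I would resolve it by choosing $\bK\in\{\Q\}\cup\{\F_p\}$ with $\pi_m(M)$ $\bK$-nontrivial, so that the universal cover $\widetilde M$ is $\bK$-nontrivial and $H^*(\Omega M;\bK)$ --- whose components are all copies of $H^*(\Omega\widetilde M;\bK)$, a connected Hopf algebra --- is, by Milnor--Moore/Borel, non-zero in arbitrarily high degrees, and then propagating a high-degree, $\gamma$-monodromy-invariant fibre class up the Serre spectral sequence of $\Omega M\to\Lambda_{\beta_k}M\to M$: this is precisely where the $\pi_1$-invariance of $x$ (resp.\ $\xi$) pays off, and it can be arranged uniformly in $k$ because the $\beta_k$ are powers of a single $\gamma$. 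Finally, $S^1$ is no counterexample, since $\pi_m(S^1)=0$ for $m\ge2$ makes the hypothesis vacuous there.
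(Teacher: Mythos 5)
Your setup coincides with the paper's: argue by contradiction, use the long exact sequence of the evaluation fibration $\Omega M \to \Lambda_\beta M \to M$ to convert the invariance/coinvariance hypothesis into a non-zero class in $\pi_j(\Lambda_\beta M)$, $j\in\{m,m-1\}$, for \emph{every} component $\beta$, and use the infinitude of conjugacy classes to push into components where the relevant critical points have large energy. But the proof does not close, and you have correctly located where: the geodesics of zero mean index have local homology spread over the whole window $[0,2n-2]$, for \emph{all} iterates, so a detecting class in the fixed degree $j$ can perfectly well be carried by such a geodesic in infinitely many components, and no contradiction results. Your proposed repair --- manufacturing monodromy-invariant classes in arbitrarily high degree $j'>D(g)$ via the Hopf algebra $H^*(\Omega\widetilde M;\bK)$ and the Serre spectral sequence --- is not justified by the hypothesis: the theorem only supplies invariance of a single class in $\pi_m$, not of high-degree loop-space classes; $\widetilde M$ is non-compact when $\pi_1(M)$ is infinite, so the Vigu\'e-Poirrier--Sullivan/Gromoll--Meyer-type growth statements you want do not apply off the shelf; and the target degree $D(g)$ depends on the metric, so you would need this uniformly and unconditionally. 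This is not a technical loose end but the actual crux of the theorem.

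The paper escapes exactly this trap with a different closing tool: the theorem of Bangert--Klingenberg (\cite[Theorem 3]{BK83}), which says that a closed geodesic that is \emph{not} a local minimum of energy and all of whose iterates have index $0$ already forces the existence of infinitely many prime closed geodesics. With this, one never needs high-degree classes: one chooses a component $\alpha$ outside the finite set of classes reachable by low-mean-index iterates of the positive-mean-index geodesics; then the non-vanishing of $\pi_\ell(\Lambda_\alpha M)$ ($\ell\ge 2$, via the analogue of your min--max, packaged as Proposition~\ref{prop: Morse}) forces a critical point above the minimum level that cannot be an iterate of a positive-mean-index geodesic, hence is a non-minimal iterate of a zero-mean-index one, and Bangert--Klingenberg finishes. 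Separately, note that your argument also does not treat the case $j=1$ (coinvariants with $m=2$): there the minimum level set is a union of circles with non-trivial $\pi_1$, so your min--max class may be absorbed by the minimum; the paper handles this by showing the kernel of $ev_*:\pi_1(\Lambda_\alpha M)\to\pi_1(M)$ is non-trivial while a lone minimum circle would make $ev_*$ injective. I recommend you either import \cite[Theorem 3]{BK83} as your closing step, or be explicit that without it (or a genuine high-degree class construction) the proof is incomplete.
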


\begin{rmk}\label{rmk: fund gp loc sys}
We remark that our condition on the higher homotopy groups does not constrain the fundamental group of the manifold. For instance, taking a product with the two-sphere does not change the fundamental group but ensures that this condition holds (see also Remark \ref{rmk: constructions}). The condition on coinvariants for $m=2$ and $A = GL(1, \F_p)$ for a finite field $\F_p$ is satisfied if and only if there exists a non-trivial rank-one $\F_p$-local system on $\cl LM$ with a trivial restriction to $M$ via the constant loop embedding $M \to \cl LM.$ We refer to \cite[Section 2.2]{AFO17} for a related discussion and \cite[Proposition 9]{AFO17}, showing that this condition, for a suitable prime $p,$ is in turn implied by the non-triviality of the image $H_2^S(M;\Z)$ of the second Hurewicz map $\pi_2(M) \to H_2(M;\Z).$ 
\end{rmk}

Recall that the natural map $\pi_1(M) \to \pi_0(\cl LM)$ from the fundamental group of $M$ to the set  $\pi_0(\cl LM)$ of connected components of the free loop space $\cl LM$ of $M$ induces a canonical isomorphism of sets \[\pi_1(M)/\mrm{conj} \xrightarrow{\cong} \pi_0(\cl LM),\] where $\pi_1(M)/\mrm{conj}$ is the set of conjugacy classes in $\pi_1(M).$ Hence requiring that the set $\pi_1(M)/\mrm{conj}$ is infinite is the least possible assumption on the richness of the fundamental group that would help establishing the existence of infinitely many geometrically distinct closed geodesics. In fact, a well-known open question in group theory, due to Makowsky \cite{Makowsky} (see also \cite{BMS01}), is whether every infinite finitely-presented group has infinitely many conjugacy classes. Note that the assumption on the conjugacy classes yields infinitely many non-homotopic closed geodesics, as by Cartan and Hilbert's theorem, there is a closed geodesic in every non-trivial free homotopy class $\al$ of loops on $M.$ (It is given as point of global minimum of energy in the corresponding connected component $\cl L_{\al}M$ of $\cl LM$.) However, these geodesics might easily not all be {\em geometrically distinct} since a geodesic $c$ and its $k$-th iterate $c^k,$ given by $c^k(t) = c(kt),$ could contribute to different free homotopy classes. In general, the free homotopy class $[c^k] \in \pi_0(\cl LM)$ of the iterate $c^k$ might depend on $k$ in a complicated way.

\begin{cor}\label{cor: generic}
Let $M$ be a closed manifold. Suppose that there exists $x \in \pi_m(M),$ $x \neq 0,$ for $m\geq 2$ such that $a \cdot x = x$ for all $a \in \pi_1(M)$ or a homomorphism $\xi: \pi_m(M) \to A,$ $\xi \neq 0$ to an abelian group $A$ such that $a^* \xi = \xi$ for all $a \in \pi_1(M).$ Then for a $C^4$-generic Riemannian metric on $M$ there exist infinitely many geometrically distinct closed geodesics.
\end{cor}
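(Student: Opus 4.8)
The plan is to derive Corollary~\ref{cor: generic} from Theorem~\ref{thm: inv coinv} together with the generic-existence results recalled in the introduction, via a case distinction on $\pi_1(M)$. First one notes that the hypothesis on $\pi_m(M)$ is really needed only in one of these cases: when $M$ is simply connected it holds automatically, since by the Hurewicz theorem a closed simply connected manifold of positive dimension has $\pi_m(M)\neq 0$ for some $m\geq 2$ and $\pi_1(M)$ acts trivially; and when $\pi_1(M)$ merely has finitely many conjugacy classes it is not used at all. Now, if $\pi_1(M)/\mrm{conj}$ is infinite, then Theorem~\ref{thm: inv coinv} applies verbatim and yields infinitely many geometrically distinct closed geodesics for every Riemannian metric on $M$, a fortiori for a $C^4$-generic one. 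If $\pi_1(M)/\mrm{conj}$ is finite and $\pi_1(M)\neq 1$, then $M$ is not simply connected and $\pi_1(M)$ has finitely many conjugacy classes, so the conclusion is precisely the $C^4$-generic existence statement of \cite{BTZ81}; when $\pi_1(M)$ is finite one may instead run a covering argument, combining \cite{KT72} on $M$ with the simply connected case treated next applied to the finite universal cover $\til{M}$ --- this is legitimate because hyperbolicity of all prime closed geodesics, unlike non-degeneracy, is inherited under lifting, and because infinitely many prime closed geodesics on $\til{M}$ force infinitely many on $M$.

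It remains to treat the case in which $M$ is simply connected, and here I would split further according to the rational cohomology ring. If $\rH^*(M;\Q)$ is not generated as a unital algebra by a single element, then by \cite{VPSullivan} the Betti numbers of the free loop space $\cl LM$ are unbounded, and hence by \cite{GromollMeyer} every Riemannian metric on $M$ carries infinitely many prime closed geodesics. If $\rH^*(M;\Q)$ is generated by a single element, then $M$ is rationally homotopy equivalent to a compact rank one symmetric space --- and not to $\RP^n$, since $M$ is simply connected --- so by \cite{Hi84} every metric on $M$ all of whose prime closed geodesics are hyperbolic has infinitely many of them, while by \cite{KT72} a $C^4$-generic metric on $M$ either has infinitely many prime closed geodesics or has all of them hyperbolic; combining the two, a $C^4$-generic metric on $M$ has infinitely many prime closed geodesics, equivalently infinitely many geometrically distinct ones.

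These three cases are plainly exhaustive, so the proof reduces to Theorem~\ref{thm: inv coinv} together with routine bookkeeping, and I do not expect a genuine obstacle. The step with any content at all is the simply connected one: verifying that a closed simply connected manifold whose rational cohomology is generated by a single element really falls within the scope of \cite{Hi84} --- the relevant manifolds being the rational homology spheres and those rationally homotopy equivalent to $\C P^n$, $\HH P^n$, or $\bb OP^2$ --- and that each cited theorem is invoked under exactly its stated hypotheses; everything of substance lies in the proof of Theorem~\ref{thm: inv coinv}.
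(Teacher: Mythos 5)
Your proposal is correct and follows essentially the same route as the paper: Theorem~\ref{thm: inv coinv} when $\pi_1(M)/\mrm{conj}$ is infinite, \cite{BTZ81} when $M$ is non-simply-connected with only finitely many conjugacy classes (the paper phrases this as a dichotomy on the powers of a fixed nontrivial $a\in\pi_1(M)$, which is the same case split), and the classical results of \cite{KT72, Hi84, Rad94} in the simply connected case. Your extra detail in the simply connected case and the alternative covering argument for finite $\pi_1$ are fine but not needed beyond what the paper cites as well known.
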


We remark that in this corollary we make no assumption on conjugacy classes in the fundamental group. It appears to be new even for manifolds of the form $M=X \times S^2,$ where $X$ is {\it an arbitrary} closed manifold (see also Remark \ref{rmk: constructions}). Of course $X$ for which prior methods do not apply are more restricted; for instance there should be an element $\al \in \pi_1(X)/\mrm{conj}$ such that $\al^k,$ $k \geq 1,$ are all distinct. 


\begin{proof}[Proof of Corollary \ref{cor: generic}]
If $M$ is simply connected, the conclusion is well-known by the work of Klingenberg-Takens \cite{KT72}, Hingston \cite{Hi84}, and Rademacher \cite{Rad94}. If $M$ is not simply connected, consider a non-trivial class $a \in \pi_1(M).$ Then either there exist two equal conjugacy classes $[a^k]=[a^l]$ for some positive integers $0<k<l,$ in which case the conclusion follows by work of Ballmann-Thorbergsson-Ziller \cite[Theorem A]{BTZ81}, or the conjugacy classes $[a^k],$ for $k$ a positive integer, are all distinct, in which case the conclusion follows by Theorem \ref{thm: inv coinv}.
\end{proof}







\begin{rmk}
By the arguments proving \cite[Corollary B]{Contreras}, one can upgrade Corollary \ref{cor: generic} to the stronger conclusion that for $C^4$-generic metrics, the number of prime periodic orbits of length at most $T$ grows exponentially in $T.$ (Note that while this is not stated explicitly, \cite[Corollary B]{Contreras} is proven therein for simply-connected manifolds.)
\end{rmk}

\begin{rmk}
It is not hard to see that Theorem \ref{thm: inv coinv} and Corollary \ref{cor: generic} extend to Finsler metrics (the latter in the $C^6$-generic case), by a finite-dimensional approximation approach (see \cite{Rad92-book} as well as \cite{RT22, RT22b}). In forthcoming work \cite{SSZ} we will show that an analogue of Corollary \ref{cor: generic} holds also for Reeb flows of arbitrary contact forms on the unit cotangent bundle $S^*M,$ by proving a weaker analogue of Theorem \ref{thm: inv coinv}. In future work we hope to prove a more direct analogue of Theorem \ref{thm: inv coinv} in the context of Reeb flows.  
\end{rmk}


\begin{rmk}\label{rmk: constructions}

It is easy to see that the family $\cl N$ of manifolds satisfying the condition of Corollary \ref{cor: generic} is an ideal in the family $\cl M$ of all closed manifolds with respect to the Cartesian product. In certain situations, this property is also preserved under connected sums. For instance, consider the family $\cl H_2 \subset \cl M$ of all closed manifolds $M$ satisfying $H_2^S(M;\Z) \neq 0$ (see Remark \ref{rmk: fund gp loc sys}). Set $\cl H^{\geq m}_2 = \cl H_2 \cap \cl M^{\geq m},$ where $\cl M^{\geq m} \subset \cl M$ is the subfamily of manifolds of dimension at least $m.$  Let $m=3.$ Then $\cl H^{\geq 3}_2$ is an ideal in $\cl M^{\geq 3}$ with respect to connect sum: if $X \in \cl M^{\geq 3}$ and $Y \in \cl H^{\geq 3}_2$ then $X \# Y \in \cl H^{\geq 3}_2.$ This is a consequence of the fact that for manifolds $X,Y \in \cl M^{\geq 3},$ by the homotopy exact sequence of a pair, $\pi_2(X \# Y) \cong \pi_2(X) \oplus \pi_2(Y),$ and the Mayer-Vietoris sequence for the natural covering $X' \cup Y' = X \# Y,$ where $X' \cong X \setminus B_X, Y' \cong Y \setminus B_Y$ for closed balls $B_X \subset X, B_Y \subset Y.$ In all dimensions $n$ other than $n \in \{0, 1, 3\}$ there exist simply-connected manifolds $X^n \in \cl H_2$ of dimension $n.$ Indeed, for $n=2,$ $X^2=S^2$ and for $n\geq 4,$ we can take $X^n = S^2 \times S^{n-2}.$ Combining with the previous point, we see by the Van Kampen theorem, that for every manifold $M \in \cl M^{\geq 4}$ there exists a manifold $M' \in \cl H^{\geq 4}_2$ with $\pi_1(M') \cong \pi_1(M).$ We can take $M' = M \# X^n.$ Similarly, for all $M \in \cl M,$ $M'' = M \times X^n \in \cl H_2$ satisfies $\pi_1(M'') \cong \pi_1(M).$

\end{rmk}

\begin{rmk}
One could imagine a generalization of Corollary \ref{cor: generic} which would hold for all closed non-aspherical manifolds, and a suitable generalization of Theorem \ref{thm: inv coinv}. However, the complicated algebraic properties of the group rings $\Z[\pi_1(M)],$ which are, for instance, often non-Noetherian \cite{Non-noetherian}, seem to be an obstruction to proving such results. 
\end{rmk}

\section{Preliminaries}

\subsection{Average index and homotopy groups.}
Let $(M,g)$ be a Riemannian manifold of dimension $\dim M = n.$ Let $\ind(c)$ denote the index of a closed geodesic $c$ in $(M,g).$ The average index (or mean-index) of $c$ is defined as \[ \Delta(c) = \lim_{m\to \infty} \frac{\ind(c^m)}{m} \in \R.\] It satisfies the following useful properties (see \cite[Proposition 6.1, Equation (6.1.3)]{GH09} and \cite[Korollar 4.4]{Rad92-book} for instance):

\begin{enumerate}
\item\label{homogeneity} $\Delta(c^m) = m \Delta(c)$ for all $m \in \Z_{>0}$
\item\label{distance to ind} $|\Delta(c)-\ind(c)| \leq n-1,$ $|\Delta(c)-(\ind(c)+\null(c))| \leq n-1$
\item\label{vanishing} $\Delta(c) \geq 0$ and $\Delta(c) = 0$ if and only if $\ind(c^m) = 0$ for all $m \in \Z_{>0}.$
\end{enumerate}


Let $\La_{\al}$ be the $L^2_1$-completion of $\cl L_{\al} M$ and $\La_{\al}^A = \{\cA < A\}$ for $A>0.$ Note that the inclusion $\cl L^A_{\al} M \to \La^A_{\al} M$ is a homotopy equivalence for every $A>0$ by a result of Anosov \cite{Anosov} (see also \cite[Proposition 2.2]{GH09}). 





We require the following property of the homotopy groups of $\La_{\al} M,$ which follows from \cite{GromollMeyer} and the properties of the mean-index (see also \cite{Chang}). 

\begin{prop}\label{prop: Morse}
If all geodesics $b \in \La_{\al} M$ with $\cA(b) \geq A$ are of mean-index $\Delta(b)>k+n-1,$ then $\pi_m(\La_{\al}) = \pi_m(\La_{\al}^A)$ for all $m \leq k.$
\end{prop}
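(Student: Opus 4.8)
The plan is to run infinite-dimensional Morse theory for the energy functional $\cA$ on the Hilbert manifold $\La_\al$. On the $L^2_1$-completion $\La_\al$ the functional $\cA$ is $C^2$, bounded below, and satisfies the Palais--Smale condition, its critical points are exactly the closed geodesics in the free homotopy class $\al$ (smooth, by elliptic regularity), and the negative gradient flow together with the standard deformation lemmas are available; in particular $\La^b_\al$ deformation retracts onto $\La^a_\al$ whenever $[a,b]$ contains no critical value. The goal is to show that the inclusion $\La^A_\al \hookrightarrow \La_\al$ is $k$-connected, so that the long exact homotopy sequence of the pair $(\La_\al,\La^A_\al)$ yields the isomorphisms $\pi_m(\La^A_\al)\xrightarrow{\cong}\pi_m(\La_\al)$ for $m\le k$.

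There are two main steps. First, convert the mean-index hypothesis into a bound on the Morse index: by property~(\ref{distance to ind}), every closed geodesic $b$ with $\cA(b)\ge A$ satisfies $\ind(b)\ge\Delta(b)-(n-1)>(k+n-1)-(n-1)=k$, hence $\ind(b)\ge k+1$. Second, analyze the crossing of a critical value $c\ge A$ of $\cA$: by Gromoll--Meyer's local Morse theory near a (possibly degenerate) closed geodesic --- the generalized Morse lemma together with the shifting theorem, applied on an isolating neighbourhood of the critical set at level $c$ --- the critical groups at level $c$ vanish in all degrees below $\min\{\ind(b):\cA(b)=c\}\ge k+1$, and correspondingly the pair $(\La^{c+\eps}_\al,\La^{c-\eps}_\al)$ is $k$-connected (up to homotopy it is obtained from $\La^{c-\eps}_\al$ by attaching handle bundles over the critical set, all of fibre dimension $\ge k+1$). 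Chaining this over the critical values in an interval $[A,B]$, which form a discrete set by Palais--Smale, shows that $(\La^B_\al,\La^A_\al)$ is $k$-connected for every $B$; since $\La_\al=\bigcup_{B>0}\La^B_\al$ and homotopy groups commute with this increasing union, $(\La_\al,\La^A_\al)$ is $k$-connected, and the stated isomorphisms follow. (Once the index bound $\ind(b)\ge k+1$ is in hand, this is precisely the conclusion of \cite{GromollMeyer}; see also \cite{Chang}.)

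The main obstacle is that, absent a genericity assumption, $\cA$ is in general neither Morse nor Morse--Bott: the set of closed geodesics at a fixed energy level --- once all iterates $c^m$ are included --- need not be a submanifold. Thus the ``attach handles of fibre dimension $\ge k+1$'' picture cannot be read off from a Morse--Bott decomposition and must instead be grounded in Gromoll--Meyer's more robust local analysis, in which one works with the critical groups of an isolating neighbourhood of each (possibly complicated) isolated critical set and shows they vanish below its index. The remaining issues --- the possible presence of infinitely many critical values, and the identification $\pi_m(\La_\al)=\varinjlim_B\pi_m(\La^B_\al)$ --- are routine, following from the Palais--Smale condition and from the fact that every sphere or disk in $\La_\al$ maps into some sublevel set $\La^B_\al$.
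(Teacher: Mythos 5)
Your approach is the intended one: the paper offers no proof of this proposition beyond the citation to Gromoll--Meyer and Chang together with the mean-index estimates, and your argument (convert $\Delta(b)>k+n-1$ into $\ind(b)\geq k+1$ via property~(2), then use the generalized Morse lemma/shifting theorem to see that crossing each critical level above $A$ only changes the sublevel set by handles of fibre dimension $\geq k+1$) is exactly the standard way to fill it in. One endpoint deserves care, though: $k$-connectivity of the pair $(\La_{\al},\La_{\al}^A)$ gives, via the long exact sequence, isomorphisms $\pi_m(\La_{\al}^A)\to\pi_m(\La_{\al})$ only for $m\leq k-1$ and \emph{surjectivity} for $m=k$ (a critical point of index exactly $k+1$ attaches a $(k+1)$-cell, which can kill a class in $\pi_k$ of the sublevel set). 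So ``the stated isomorphisms follow'' is an overstatement at $m=k$; to get a genuine isomorphism there one would need $\ind(b)\geq k+2$, i.e.\ $\Delta(b)>k+n$. This imprecision is inherited from the proposition as stated in the paper, and it is harmless downstream, since the application in the proof of Theorem~\ref{thm: inv coinv} only uses that $\pi_\ell(\La_{\al}^A)=0$ forces $\pi_\ell(\La_{\al})=0$, for which surjectivity suffices; but you should either weaken the conclusion at $m=k$ to surjectivity or strengthen the hypothesis. A smaller quibble: Palais--Smale does not by itself make the set of critical values discrete (it only gives compactness of the critical set in each level slab), so the ``chaining'' step should invoke the second deformation theorem as in Chang's book, or note that in the intended application the critical values $j^2\cA(\gamma_i)$ are finite in number on every bounded interval.
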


We recall the following long exact sequence from \cite{Tai85}. Fix $\al \in \pi_0(\cl L M).$ Consider the evaluation map $ev: \La_{\al} M \to M,$ $z \mapsto z(0),$ where $z \in \La_{\al} M$ is considered as an absolutely continuous map $z:\R/\Z \to M$ with square-integrable velocity vector. This map is a Serre fibration. Fix base-points $x_0 \in M$ and $\gamma_0 \in ev^{-1}(x_0).$ Let $a \in \pi_1(M,x_0)$ be the class represented by $\gamma_0.$ Of course $[a] = \al$ in $\pi_1(M)/\mrm{conj}.$

\begin{prop}\label{prop: les}
The long exact sequence of a fibration for $ev$ and an identification $\pi_k(ev^{-1}(x_0),\gamma_0) \cong \pi_{k+1}(M,x_0)$ yield the long exact sequence: \[ \ldots \to \pi_{m+1}(M,x_0) \xrightarrow{1-a} \pi_{m+1}(M,x_0) \to \pi_m(\La_{\al} M, \gamma_0) \to  \pi_{m}(M,x_0) \xrightarrow{1-a} \pi_{m}(M,x_0) \to \ldots,\] for $m \geq 2,$ where $a: \pi_k(M,x_0) \to \pi_k(M,x_0)$ is the standard action of the fundamental group on higher homotopy groups. For $m=1$ the sequence reads: \[ \ldots \to \pi_{2}(M,x_0) \xrightarrow{1-a} \pi_{2}(M,x_0) \to \pi_1(\La_{\al} M, \gamma_0) \to  C_a \to 1,\] where $C_a \subset \pi_1(M,x_0)$ is the centralizer of $a.$
\end{prop}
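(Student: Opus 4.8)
The plan is to apply the homotopy long exact sequence of the Serre fibration $ev\colon\La_\al M\to M$ recorded just above, to compute its fiber and the homotopy groups of the fiber, and then to identify the resulting connecting homomorphism with $1-a$. This last step is the only non-formal point and is, in essence, Taimanov's computation in \cite{Tai85}.

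First I would describe the fiber $F=ev^{-1}(x_0)$. A point of $F$ is an $L^2_1$ loop based at $x_0$ whose free homotopy class is $\al$, and, the inclusion of continuous (or smooth) loops into $L^2_1$ loops being a homotopy equivalence, $F$ is homotopy equivalent to the space of based continuous loops at $x_0$ in the free class $\al$. Hence $\pi_0(F)$ is the conjugacy class of $a$ in $\pi_1(M,x_0)$, regarded as a pointed $\pi_1(M,x_0)$-set under the conjugation action with base point $[\gamma_0]=[a]$, and the path component $F_0$ of $\gamma_0$ is the space $\Om_a M$ of based loops at $x_0$ of based homotopy class $a$. Concatenation with the fixed loop $\gamma_0$ (or with its reverse) is a homotopy equivalence $\Om_a M\simeq\Om_e M$ onto the component of the constant loop $c_{x_0}$; composing with the standard adjunction isomorphism $\pi_k(\Om_e M,c_{x_0})\cong\pi_{k+1}(M,x_0)$ --- the connecting isomorphism of the contractible based path fibration $P_{x_0}M\to M$ --- yields the identification $\pi_k(F,\gamma_0)\cong\pi_{k+1}(M,x_0)$ used in the statement. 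I would fix this identification once and for all via $\gamma_0$; it is canonical up to the $\pi_1(M)$-action on $\pi_{k+1}(M,x_0)$, which is harmless since that action enters the conclusion anyway.

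Next I would write out the homotopy long exact sequence of $ev$ based at $\gamma_0$,
\[
\cdots\to\pi_{m+1}(M,x_0)\xrightarrow{\ \partial\ }\pi_m(F,\gamma_0)\xrightarrow{\ i_*\ }\pi_m(\La_\al M,\gamma_0)\xrightarrow{\ ev_*\ }\pi_m(M,x_0)\to\cdots,
\]
terminating with $\pi_1(\La_\al M,\gamma_0)\xrightarrow{ev_*}\pi_1(M,x_0)\xrightarrow{\partial}\pi_0(F)\xrightarrow{i_*}\pi_0(\La_\al M)=\{*\}$, and substitute $\pi_m(F,\gamma_0)\cong\pi_{m+1}(M,x_0)$. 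For $m\ge1$ this turns $\partial$ into an endomorphism $\partial'$ of $\pi_{m+1}(M,x_0)$, producing the displayed sequence for $m\ge2$ and, for $m=1$, the tail $\pi_2(M,x_0)\xrightarrow{\partial'}\pi_2(M,x_0)\to\pi_1(\La_\al M,\gamma_0)\xrightarrow{ev_*}\pi_1(M,x_0)\xrightarrow{\partial}\pi_0(F)$. Since $M$ and $\La_\al M$ are connected, exactness of the pointed-set portion shows that $\partial$ is onto $\pi_0(F)$ and identifies the image of $\pi_1(\La_\al M,\gamma_0)$ under $ev_*$ with $\{g\in\pi_1(M,x_0):gag^{-1}=a\}=C_a$, the centralizer of $a$; as $ev_*$ is a group homomorphism onto $C_a$, we obtain the surjection $\pi_1(\La_\al M,\gamma_0)\twoheadrightarrow C_a$ with kernel $\Im(i_*)\cong\pi_2(M,x_0)/\Im(\partial')$. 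This is what "$\to C_a\to1$" records.

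It remains --- and this is the crux --- to show $\partial'=1-a$. I would establish this by pulling $ev$ back to the universal cover $q\colon\widetilde M\to M$. A lift of a point of $\La_\al M$ lying over $x\in M$ is a loop at $x$ of class $\al$ together with a lift of its base point, i.e.\ a path $\widetilde{\ga}$ in $\widetilde M$ with $\widetilde{\ga}(1)=a\cdot\widetilde{\ga}(0)$; this exhibits a connected covering $\widetilde{\La}\to\La_\al M$ with deck group $C_a$, compatible with $ev$ (via $\widetilde{\ga}\mapsto\widetilde{\ga}(0)$) and with $q$. Now $\widetilde{\La}$, with that map to $\widetilde M$, is precisely the homotopy equalizer of the self-maps $\id, a\colon\widetilde M\to\widetilde M$; its connecting homomorphism, on $\pi_k(\widetilde M)\cong\pi_k(M)$, is the difference of the two composites, namely $x\mapsto x-a\cdot x$, where $a\cdot$ is the standard $\pi_1$-action --- which on the universal cover is realized by the deck transformation $a$ followed by the change of base point along $\gamma_0$. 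Since the covering $\widetilde{\La}\to\La_\al M$ induces isomorphisms on $\pi_k$ of total spaces for $k\ge2$ and on all homotopy groups of the fibers at the base point, naturality of the long exact sequences transports this to $\partial'$; hence $\partial'=1-a$. (The same conclusion can be read off directly from the monodromy of $ev$, which along $g\in\pi_1(M,x_0)$ is the conjugation $\ell\mapsto\bar g*\ell*g$ on $F$ --- conjugation by $g$ on $\pi_0(F)$, and the standard $g$-action on $\pi_k(F_0,\gamma_0)\cong\pi_{k+1}(M,x_0)$.) The main obstacle is precisely this identification of the connecting map; everything else is a routine unwinding of the fibration sequence, and one may alternatively just invoke Taimanov \cite{Tai85}.
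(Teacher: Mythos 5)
Your argument is correct and follows exactly the route the proposition itself prescribes (the homotopy long exact sequence of the Serre fibration $ev$ together with the identification $\pi_k(ev^{-1}(x_0),\gamma_0)\cong\pi_{k+1}(M,x_0)$); the paper gives no proof of its own and simply recalls the statement from Taimanov, so your write-up supplies the missing details, including the key identification $\partial'=1-a$ via the homotopy equalizer over the universal cover and the identification of $\Im(ev_*)$ with the centralizer $C_a$ from the monodromy on $\pi_0$ of the fiber. Your remark that the identification of the fiber's homotopy groups is canonical only up to the $\pi_1$-action is the right caveat, and it is indeed harmless for the way the sequence is used in the proof of Theorem \ref{thm: inv coinv}.
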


\section{Proof of main result}

%
%

\begin{proof}[Proof of Theorem \ref{thm: inv coinv}]



By Proposition \ref{prop: les}, the hypothesis of the theorem implies that $\pi_{\ell}(\La_{\alpha} M) \neq 0$ for $\ell=m$ or $\ell=m-1$ and all $\alpha \in \pi_0(\cl LM).$ 

Suppose by contradiction that there are finitely many prime closed geodesics. (In particular, every closed geodesic is isolated.) We denote them by $\gamma_1,...,\gamma_P, \gamma_{P+1},...,\gamma_N,$ such that the geodesics $\gamma_1,...,\gamma_P$ have mean-index $\Delta(\gamma_i) > 0$ and $\gamma_i$ for $i>P$ have $\Delta(\gamma_i) = 0.$  Then by Property \eqref{vanishing} of the mean-index, for every $i>P,$ $\ind(\gamma_i^j) = 0$ for all iterations $j.$  Now all closed geodesics are iterations of $\gamma_1, \ldots, \gamma_N$. Consider the set $S$ of all the conjugacy classes $[\gamma_i^j],$ $1 \leq i \leq P$ such that $\Delta(\gamma_i^j) \leq n-1+\ell.$ Since the mean-indices of $\gamma_i$ for $1 \leq i \leq P$ are positive, by Property \eqref{homogeneity} of the mean-index, $S$ is a finite set. Since $\pi_0(\cl LM)$ is an infinite set, so is the complement $\pi_0(\cl LM) \setminus S.$ Consider $\alpha \in \pi_0(\cl LM) \setminus S.$ Now let $c_1, \ldots, c_r \in \La_{\alpha} M$ be all the points of absolute minimum of the energy functional on $\La_{\al} M$. Then the level set of this minimum is the disjoint union $ \sqcup_{j=1}^r S(c_j)),$ where $S(c_j)$ is the circle given by $c_j$ and its rotations, and therefore its $k$-th homotopy groups vanish for all $k>1.$ We now claim that there exists a geodesic $\eta \in \La_{\alpha} M$ that is not a global minimum and is not an iteration of any of $\gamma_1,...,\gamma_P.$ Note that as $\eta$ must be an iteration of some $\gamma_i$ for $i>P$ we must have $\ind(\eta^r) = 0$ for all $r \geq 1.$ By Bangert-Klingenberg \cite[Theorem 3]{BK83}, this implies that there exist infinitely many prime closed geodesics on $M.$

To prove the last claim, we treat the cases $\ell \geq 2$ and $\ell = 1$ slightly differently. Suppose first that $\ell \geq 2.$ The contrapositive assumption that all closed geodesics above the minimum energy are iterations of $\gamma_1, \ldots, \gamma_P$, implies that  $\pi_\ell(\La_{\alpha} M) = 0$ by Proposition \ref{prop: Morse}, since by construction, any iteration $\eta$ of $\gamma_1, \ldots, \gamma_P$ in $\alpha$ has $\Delta(\eta)> n-1+\ell.$ But we know that $\pi_{\ell}(\La_{\alpha} M) \neq 0.$ In the case where $\ell =1,$ we observe that the coinvariant hypothesis and Proposition \ref{prop: les} yields that $\ker(ev: \pi_1(\La_{\alpha} M, \gamma_0) \to \pi_1(M,x_0)) \neq 0.$ Now, we argue that there can only be one point of global minimum under the contrapositive assumption. Indeed, had we $N>1$ global minima $c_1,\ldots,c_N$, the contrapositive assumption would yield that $\pi_0(\La_{\alpha} M)$ is a set of $N$ elements, which contradicts the connectedness of $\La_{\alpha} M.$ Hence there is a unique global minimum $c.$ Now the contrapositive assumption yields that $\pi_1(\La_{\alpha} M) \cong \pi_1(S(c)) \cong \Z$ and the evaluation map $\pi_1(S(c)) \to C_{a}$ is injective. This is a contradiction to its kernel being non-zero. This proves the claim for all $\ell \geq 1.$  \end{proof}

\section*{Acknowledgements}
We thank the participants of the reading group on closed geodesics at the Universit\'e de Montr\'eal, especially Marco Mazzucchelli, for useful discussions.
E.S. was supported by an NSERC Discovery grant, by the Fonds de recherche du Qu\'{e}bec - Nature et technologies, by the Fondation Courtois, and by an Alfred P. Sloan Research Fellowship. This work was partially supported by the National Science Foundation under Grant No. DMS-1928930, while E.S. was in residence at the Simons Laufer Mathematical Sciences Institute (previously known as MSRI) Berkeley, California during the Fall 2022 semester. 
J.Z. is supported by USTC Research Funds of the Double First-Class Initiative. This work was initiated when J.Z. was a CRM-ISM Postdoctoral Research Fellow at CRM, Universit\'e de Montr\'eal, and he thanks this institute for its warm hospitality.

\bibliographystyle{plainurl}
\bibliography{bibliographyR,bibliographyHTU,bibliographyHZ}
\medskip

\medskip

\end{document}